\font\smallit=cmti10
\font\smalltt=cmtt10
\renewcommand\section{\@startsection {section}{1}{\z@}
{-30pt \@plus -1ex \@minus -.2ex}
{2.3ex \@plus.2ex}
{\normalfont\normalsize\bfseries\boldmath}}
\renewcommand\subsection{\@startsection{subsection}{2}{\z@}
{-3.25ex\@plus -1ex \@minus -.2ex}
{1.5ex \@plus .2ex}
{\normalfont\normalsize\bfseries\boldmath}}
\renewcommand{\@seccntformat}[1]{\csname the#1\endcsname. }
\newtheorem{theorem}{Theorem}
\newtheorem{lemma}{Lemma}
\newtheorem{corollary}{Corollary}
\theoremstyle{definition}
\begin{document}
%Next comes your title and author list information in the following construct:

%
%p. 2/ when citing references,  we suggest you either omit the authors' names (just citing the number of the reference);  or re-write the sentence.  In particular, do not hyphenate the names of multiple authors.  If you do wish the give the last names of the authors, then the statement  should (essentially) begin with  the authors names; for example, p. 2, line 11 could be written as "In [8], Jager and DeVroet give an excellent introduction...."  Please follow these suggestions in all cases.  
%p. 2, line 20/ it is not good form for an author to describe their own work as "elegant".  You can either omit the word or change to something less subjective, such as "effective"
%p. 3, -7/ ", see"  -> "; see"
%p. 9, line 1/ remove italics
%p. 9, Acknowledgements/  there are 2 instances of "J.S.A" - change both to "The first author"
\begin{center}
\uppercase{\bf Extrema of Luroth Digits and a zeta function limit relations}
\vskip 20pt
{\bf Jayadev S. Athreya\footnote{Supported by National Science Foundation Grant DMS 2003528}}\\
{\smallit Department of Mathematics and Department of Comparative History of Ideas, University of Washington, Seattle, WA, USA.}\\
{\tt jathreya@uw.edu}\\ 
{\bf Krishna B. Athreya}\\
{\smallit Department of Mathematics and Department of Statistics, Iowa State University, Ames, IA, USA.}\\
{\tt kbathreya@gmail.com}\\ 
\end{center}
\vskip 20pt
\centerline{\smallit Received: , Revised: , Accepted: , Published: } % We will fill in the dates
\vskip 30pt

%
%Section 1/ please do not have any subsections in this section. Please see my comment from my previous email.  Likewise for sections 2 and 3. 
%
%p. 6, line -4/ change double arrow to "if and only if"
%p. 6, line -2/ "k = 2 to 40"  -> "2 \leq k \leq 40"
%
%Section 3/ please remove the bold-faced headings "Stable laws", "Distribution of maxima", etc.  There is no need for these headings. Just begin a new paragraph in each instance. 
%
%p. 7, line -9/ remove the comma after "we have" 
%
%p. 8, line 7/ "samples"  -> "sample" 
%
%p. 8, line 13/ insert comma after "that is" 
%
%p.8, line -8/  replace period with a comma at the end of the display
%
%p. 10, line 6/ "Diamond-Vaaler"  -> "Diamond and Vaaler" 
%Followed by your abstract in the following construct:

\centerline{\bf Abstract}
\noindent
We describe how certain properties of the extrema of the digits of Luroth expansions lead to a probabilistic proof of a limiting relation involving the Riemann zeta function and the Bernoulli triangles. We also discuss trimmed sums of Luroth digits. Our goal is to show how direct computations in this case lead to formulas and some interesting discussions of special functions. 

\pagestyle{myheadings}
\markright{\smalltt INTEGERS: 21 (2021)\hfill}
\thispagestyle{empty}
\baselineskip=12.875pt
\vskip 30pt

\section{Introduction} 

Let $X_1, X_2, \ldots X_k, \ldots$ be a sequence of independent, identically distributed (IID) random variables taking values in the positive integers $\mathbb N =\{1, 2, 3, \ldots \}$ with $$P(X_1 = n) = \frac{1}{n(n+1)}, n \geq 1.$$ We call this distribution the \emph{Luroth} distribution. The \emph{Luroth map} $L: (0, 1] \rightarrow (0, 1]$ is the piecewise-linear map given by $$L(x) = N(x) ( (N(x)+1) x - 1),$$ where $$N(x) =\left \lfloor \frac 1 x \right \rfloor.$$ It is a nice exercise to check that $L$ preserves Lebesgue measure $m$, and that if $U$ is a uniform $(0, 1]$ random variable, the sequence $X_k = N(L^{k-1} (U))$ is a sequence of IID random variables with the above distribution, since $$P(X_1 = n) = m\left(N^{-1}(n)\right) = m\left(\frac{1}{n+1}, \frac{1}{n}\right] = \frac{1}{n(n+1)}.$$ 

This map and its associated digit expansion was introduced by Luroth in~\cite{Luroth}.  Subsequently, Luroth random variables have been extensively studied,  since they provide an attractive motivating example at the intersection of many questions on number theory, probability theory, and dynamical systems and ergodic theory. The Luroth map is in some sense a kind of linearized version of the Gauss map (see Section~\ref{sec:Gauss}), where, as we remark above, the resulting digit expansion of numbers have the appealing property that the digits are \emph{independent} random variables. 

This makes it often possible to do very explicit computations with Luroth random variables. See~\cite{JDv} for an introduction to the ergodic properties of the map $L$, ~\cite{Galambos} for a discussion of different types of digit expansions, and, for example~\cite{Giuliano} and~\cite{Tan} and the references within for more recent work on Luroth and related expansions from the probabilistic and dynamical perspectives respectively.

Our results center on understanding the limiting behavior of \emph{maxima} of sequences of IID Luroth random variables, and the convergence behavior of appropriately centered and scaled sums. These are classical topics in the theory of sequences of random variables, and focusing on this setting leads to some interesting computations.

Using the fact that the Luroth random variables are \emph{heavy-tailed}, we will show that the probability $\rho_k$ that the maximum $M_k$ of the first $k$ elements $\{X_1, \ldots, X_k\}$ of a a sequence of Luroth variables is \emph{unique} tends to $1$ as $k \rightarrow \infty.$ Our main new result, Theorem~\ref{theorem:rhok}, shows how to \emph{explicitly} compute this probability $\rho_k$, giving an interesting relationship between this probability, the Riemann zeta function, and partial sums of binomial coefficients which are entries in what is known as Bernoulli's triangle. Using the fact that $\rho_k \rightarrow 1$, we obtain a new limiting relation, Corollary~\ref{cor:zeta}, between these coefficients and the Riemann zeta function.

Our other main result considers the behavior of the sample sums $$S_k = \sum_{i=1}^k X_i.$$ Since $E(X_1) = \infty$, the law of large numbers shows that $S_k/k$ tends to infinity with probability one. Inspired by work of Diamond-Vaaler, who studied the corresponding \emph{trimmed sum} for continued fraction expansions, and using a result of Mori~\cite{Mori} we show in Theorem~\ref{theorem:trimmed} that by removing the maximum, and normalizing by $k\log k$, we have, with probability $1$, as $k \rightarrow \infty$,  $$\frac{S_k-M_k}{k\log k} \rightarrow 1.$$ Note that the asymptotic uniqueness result says that asymptotically, we are removing the \emph{unique} largest summand in $S_k$ by subtracting $M_k$. Let $$M_k = \max(X_1, \ldots, X_k)$$ be the sample maximum, and let $$\rho_k = P(\mbox{ there exists } !  1 \le i \le k \mbox{ such that }X_i = M_k)$$ be the probability that this maximum is achieved exactly once.  Our first main observation is the following explicit formula for $\rho_k$. 
\begin{theorem}\label{theorem:rhok} For $k \geq 1$, \begin{equation}\label{eq:rhok} \rho_k = k\left (2^{k-1} + \sum_{j=2}^k T(k-1, k-j) \zeta(j) (-1)^{j+1}\right),\end{equation}  where $\zeta(j) = \sum_{n=1}^{\infty} n^{-j}, j > 1$ is the Riemann zeta function and for $l \geq j$ positive integers, $$T(l, j) = \sum_{i=0}^j \binom{l}{i}.$$
\end{theorem}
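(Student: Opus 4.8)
The plan is to compute $\rho_k$ directly as a convergent series and then massage the summand by partial fractions until the Riemann zeta values and the Bernoulli-triangle entries $T(k-1,k-j)$ appear. First I would record the distributional data that make the computation explicit: telescoping $P(X_1=n)=\tfrac1n-\tfrac1{n+1}$ gives the cumulative distribution $P(X_1\le n)=\tfrac{n}{n+1}$. Conditioning on the value $n$ of the maximum and on which of the $k$ coordinates attains it, the event that $M_k=n$ is attained exactly once has probability $k\,P(X_1=n)\,P(X_1\le n-1)^{k-1}$, so that for $k\ge 2$
\begin{equation*}
\rho_k=k\sum_{n\ge 1}\frac{1}{n(n+1)}\left(\frac{n-1}{n}\right)^{k-1}=k\sum_{n\ge 1}\frac{(n-1)^{k-1}}{n^{k}(n+1)},
\end{equation*}
the $n=1$ term vanishing. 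The task is therefore to evaluate this last series in closed form.

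The algebraic heart of the argument is a partial-fraction expansion of the summand. Expanding $(n-1)^{k-1}$ by the binomial theorem and writing $l=k-i$ turns $\tfrac{(n-1)^{k-1}}{n^k(n+1)}$ into a combination of the simpler rational functions $\tfrac{1}{n^l(n+1)}$, $1\le l\le k$, with coefficients $\binom{k-1}{k-l}(-1)^{l-1}$. Each of these I would decompose using the telescoping identity $\tfrac{1}{n^l(n+1)}=\tfrac{1}{n^l}-\tfrac{1}{n^{l-1}(n+1)}$, iterated down to $l=0$, which yields $\tfrac{1}{n^l(n+1)}=\sum_{s=1}^{l}(-1)^{l-s}n^{-s}+(-1)^{l}(n+1)^{-1}$. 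Collecting the coefficient of $n^{-s}$ across all $l\ge s$ produces a partial-row sum of binomial coefficients, and the substitution $l'=k-l$ identifies it as exactly $(-1)^{s+1}\sum_{l'=0}^{k-s}\binom{k-1}{l'}=(-1)^{s+1}T(k-1,k-s)$; the coefficient of $(n+1)^{-1}$ collapses to $-\sum_{l'=0}^{k-1}\binom{k-1}{l'}=-2^{k-1}$.

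This is the step I expect to be the main obstacle, both because the bookkeeping of signs must be done carefully and because the decomposition cannot be summed term by term: the $s=1$ contribution $\sum_n n^{-1}$ and the $\sum_n(n+1)^{-1}$ contribution each diverge. The resolution is to notice that the $s=1$ coefficient is $T(k-1,k-1)=2^{k-1}$, which matches the $(n+1)^{-1}$ coefficient, so those two pieces recombine into the convergent telescoping series $2^{k-1}\tfrac{1}{n(n+1)}$. What remains is
\begin{equation*}
\frac{(n-1)^{k-1}}{n^k(n+1)}=2^{k-1}\frac{1}{n(n+1)}+\sum_{s=2}^{k}(-1)^{s+1}T(k-1,k-s)\,\frac{1}{n^s},
\end{equation*}
where every surviving series now converges absolutely. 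Summing over $n\ge 1$ with $\sum_{n\ge1}\tfrac{1}{n(n+1)}=1$ and $\sum_{n\ge1}n^{-s}=\zeta(s)$ for $s\ge2$, and multiplying by $k$, gives precisely \eqref{eq:rhok} after renaming $s$ as $j$.
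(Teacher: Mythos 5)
Your proof is correct, and it reaches the paper's central identity --- the partial-fraction decomposition of $Q_k(m)=\frac{(m-1)^{k-1}}{m^k(m+1)}$ recorded as Lemma~\ref{lem:partial} --- by a genuinely different route. The paper verifies that decomposition by induction on $k$, using the recursion $Q_{k+1}(m)=(1-\tfrac1m)Q_k(m)$ together with the Pascal-type recursion $c(j,k)=c(j,k-1)-c(j-1,k-1)$ for the coefficients $c(j,k)=(-1)^{j+1}T(k-1,k-j)$; the identity is, in effect, guessed and then checked. You instead derive it directly: expand $(n-1)^{k-1}$ binomially, reduce each $\frac{1}{n^l(n+1)}$ by the iterated telescoping identity $\frac{1}{n^l(n+1)}=\frac{1}{n^l}-\frac{1}{n^{l-1}(n+1)}$, and collect the coefficient of each $n^{-s}$, which is visibly the partial row sum $(-1)^{s+1}\sum_{l'=0}^{k-s}\binom{k-1}{l'}=(-1)^{s+1}T(k-1,k-s)$. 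This has the advantage of explaining where the Bernoulli-triangle entries come from rather than merely confirming them, and you correctly isolate and handle the one delicate point: the individually divergent $s=1$ and $(n+1)^{-1}$ pieces must be recombined into the convergent $2^{k-1}\frac{1}{n(n+1)}$ before summing over $n$, exactly as the paper's lemma does by keeping the $\frac1m-\frac1{m+1}$ term intact. The final step, summing over $n$ using $\sum_{n\ge1}\frac{1}{n(n+1)}=1$ and $\sum_{n\ge1}n^{-j}=\zeta(j)$ and multiplying by $k$, coincides with the paper's; the only cosmetic omission is the trivial check of the $k=1$ case, which your identity also covers since the sum over $j$ is then empty.
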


By \cite{AF, ESS}, we have $$\lim_{k \rightarrow \infty} \rho_k = 1.$$ Thus, as a corollary, we obtain the following limiting relation involving the coefficients $T(l, j)$ and values of the Riemann zeta function. 

%since $$\lim_{n \rightarrow \infty} \frac{P(X_1 =n)}{P(X_1 \geq n) }= 0,$$ that is, the Luroth distribution is heavy-tailed. 
\begin{corollary}\label{cor:zeta} We have
\begin{equation}\label{eq:zeta}\lim_{k \rightarrow \infty}  k\left (2^{k-1} + \sum_{j=2}^k T(k-1, k-j) \zeta(j) (-1)^{j+1}\right) = 1.\end{equation}

\end{corollary}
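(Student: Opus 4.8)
The statement is an immediate consequence of Theorem~\ref{theorem:rhok} together with the fact that $\rho_k \to 1$. Indeed, by~\eqref{eq:rhok} the quantity inside the limit in~\eqref{eq:zeta} is exactly $\rho_k$, so~\eqref{eq:zeta} asserts precisely that $\lim_{k\to\infty}\rho_k = 1$. The plan is therefore to reduce the corollary to this probabilistic limit and then establish the limit --- either by invoking \cite{AF, ESS}, or, as I would prefer for a self-contained argument, by a direct computation exploiting the special structure of the Luroth distribution.

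For the direct route, first I would write down a closed probabilistic expression for $\rho_k$. Since the $X_i$ are IID, the event that the maximum equals $n$ and is attained exactly once has probability $k\,P(X_1 = n)\,P(X_1 < n)^{k-1}$, and the telescoping identity $P(X_1 < n) = \sum_{m=1}^{n-1}\frac{1}{m(m+1)} = 1 - \frac1n$ gives
\begin{equation}\label{eq:rhorep}
\rho_k = k\sum_{n\ge 2}\frac{1}{n(n+1)}\left(1-\frac1n\right)^{k-1},
\end{equation}
the $n=1$ term vanishing identically. This representation is the technical heart of the argument: it converts the alternating zeta-and-binomial sum of Theorem~\ref{theorem:rhok} into a manifestly positive series amenable to asymptotic analysis.

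Next I would extract the $k\to\infty$ asymptotics of~\eqref{eq:rhorep}. The natural substitution is $u_n = k/n$, under which the prefactor $\frac{k}{n(n+1)}$ is \emph{exactly} the increment $u_n - u_{n+1}$; since $(1-\frac1n)^{k-1}\approx e^{-u_n}$ and the term is peaked near $n \approx k$, the series in~\eqref{eq:rhorep} is a Riemann sum for $\int_0^\infty e^{-u}\,du = 1$. Making this rigorous is where the real work lies: I would justify the interchange of limit and summation by a dominated-convergence or direct tail-estimate argument, controlling the replacement of $(1-\frac1n)^{k-1}$ by $e^{-u_n}$ uniformly across the relevant range of $n$ and bounding the small-$n$ and large-$n$ contributions separately. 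The main obstacle is precisely this uniform error control, since both the factor $(1-\frac1n)^{k-1}$ and the step size $u_n - u_{n+1}$ vary with $k$ near the peak, so the naive termwise limit of each summand is zero and all of the mass escapes to the tail as $k\to\infty$.
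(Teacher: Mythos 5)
Your reduction of the corollary to the statement $\lim_{k\to\infty}\rho_k=1$ via Theorem~\ref{theorem:rhok} is exactly what the paper does, and your first suggested route for that limit --- invoking \cite{AF, ESS} --- is precisely the paper's proof: those papers show $\rho_k\to 1$ if and only if $p_m/\tau_m\to 0$, and here $p_m/\tau_m=\frac{1}{m+1}\to 0$. Your preferred self-contained alternative is genuinely different and is correct in outline: the representation $\rho_k=k\sum_{n\ge 2}\frac{1}{n(n+1)}\left(1-\frac1n\right)^{k-1}$ is right (it is the paper's $\sum_m \rho_{k,m}$ before the partial-fraction expansion), the identification of $\frac{k}{n(n+1)}$ with the increment $u_n-u_{n+1}$ for $u_n=k/n$ is the right change of variables, and you correctly diagnose the one real issue, namely that the termwise limit is $0$ and the mass concentrates at $n\asymp k$, so a naive dominated-convergence argument on the original sum fails. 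The error control you defer is doable: bound the range $n\le \epsilon k$ using $(1-\tfrac1n)^{k-1}\le e^{-(k-1)/n}$ and monotonicity to compare with $\int_{1/\epsilon}^{\infty}e^{-u(1-1/k)}\,du$, and on $n\ge\epsilon k$ the replacement of $(1-\tfrac1n)^{k-1}$ by $e^{-u_n}$ costs only $e^{O(1/(\epsilon^2 k))}$. What the two approaches buy: the citation is immediate and rests on a general criterion characterizing asymptotic uniqueness of the maximum for arbitrary $\mathbb N$-valued IID sequences, while your Riemann-sum argument keeps the paper self-contained and makes visible \emph{why} the limit is $1$ (the sum is a discretization of $\int_0^\infty e^{-u}\,du$), at the cost of a page of uniform estimates you have only sketched.
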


The coefficients $T(l, j)$, partial sums of binomial coefficients, are entries in what is known as \emph{Bernoulli's triangle}; see~\cite{OEIS-A008949}. Note that $T(l, 0) = 1$, $T(l, l) = 2^l$, and $T(l, l-1) = 2^{l} -1$. Let $$S_k = \sum_{i=1}^k X_i$$ be the sample sum. Since $E(X_1) = \infty$, we have, by the strong law of large numbers, $$\lim_{k \rightarrow \infty} \frac{S_k}{k} = +\infty.$$ However, if we subtract the sample maximum, we have the following almost sure result.

\begin{theorem}\label{theorem:trimmed} With probability $1$, $$\lim_{k \rightarrow \infty}  \frac{S_k - M_k}{k \log k} = 1.$$

\end{theorem}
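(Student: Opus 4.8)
The plan is to recognize Theorem~\ref{theorem:trimmed} as an instance of a strong law for \emph{trimmed} sums of heavy-tailed i.i.d.\ variables, which is exactly the content of Mori's theorem~\cite{Mori}. The first step is to record the tail and the truncated first moment of the Luroth distribution. By telescoping,
$$P(X_1 > n) = \sum_{m=n+1}^\infty \frac{1}{m(m+1)} = \sum_{m=n+1}^\infty \left(\frac1m - \frac1{m+1}\right) = \frac{1}{n+1},$$
so the tail is regularly varying of index $-1$ and $k\,P(X_1 > k) \to 1$; this is the $\alpha=1$ ``barely infinite mean'' regime. The associated truncated mean is elementary:
$$\mu(t) := E\left[X_1 ; X_1 \le t\right] = \sum_{n=1}^{\lfloor t\rfloor} n\cdot \frac{1}{n(n+1)} = \sum_{n=1}^{\lfloor t \rfloor}\frac{1}{n+1} = H_{\lfloor t\rfloor +1} - 1,$$
where $H_m$ denotes the $m$-th harmonic number, so that $\mu(t) \sim \log t$ as $t \to \infty$.

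The second step is to invoke Mori's result on sums with extreme terms excluded. Because $k\,P(X_1 > k) \to 1$, the natural truncation level for removing a single largest summand is $a_k \asymp k$, and Mori's theorem supplies a normalizing sequence $\gamma_k \sim k\,\mu(a_k)$ for which $(S_k - M_k)/\gamma_k \to 1$ almost surely. Here one must verify the hypotheses of the theorem for the Luroth distribution: nonnegativity, the index-$1$ regular variation of the tail, and the slow variation and divergence of $\mu$, all of which follow immediately from the two computations above.

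The final step is to identify $\gamma_k$ with $k\log k$. Since $\mu(t)\sim\log t$ is slowly varying, the precise choice of level $a_k \asymp k$ is irrelevant: $\mu(a_k) = \mu\bigl(ck(1+o(1))\bigr) \sim \log k$ regardless of the constant $c$, so $\gamma_k \sim k\log k$ and
$$\lim_{k\to\infty} \frac{\gamma_k}{k\log k} = \lim_{k\to\infty}\frac{H_{k+1}-1}{\log k} = 1.$$
Combining the three steps yields $(S_k - M_k)/(k\log k)\to 1$ almost surely. I expect the main obstacle to be the careful application of Mori's theorem: matching the abstract normalizing sequence $\gamma_k$ appearing there to the explicit quantity $k\,\mu(k)$, and confirming that trimming \emph{exactly one} term—the maximum $M_k$—already suffices for almost sure convergence in the $\alpha=1$ case (for $\alpha<1$ no fixed amount of trimming would work). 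This boundary phenomenon is precisely why the result holds for the Luroth tail $1/(n+1)$, whose index of regular variation is exactly $1$. As a sanity check, the analogous Diamond--Vaaler result for continued-fraction digits, whose tail is $\sim(n\log 2)^{-1}$, produces the limiting constant $1/\log 2$, matching the tail constant; here the Luroth tail constant is $1$, giving the limit $1$.
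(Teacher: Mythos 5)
Your overall strategy---computing the tail $P(X_1 > n) = 1/(n+1)$ and the truncated mean $\mu(t) \sim \log t$, then invoking Mori's theorem with one term trimmed to obtain the normalization $k\log k$---is the same route the paper takes, and your identification of the limiting constant is correct: the paper's centering $c_k = \frac{k}{A(k)}E(X_1 \mid X_1 < A(k))$ with $A(k)=k\log k$ tends to $1$ for exactly the reason your $\mu(a_k)/\log k \to 1$ does, slow variation making the choice of truncation level immaterial. (The paper also includes a preliminary convergence-in-distribution argument via the stable law of order $1$; skipping that is harmless, since it is not needed for the almost sure statement.)

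The gap is in the step you yourself flag as the main obstacle: the actual application of Mori's theorem. The hypotheses you propose to check---nonnegativity, index-$1$ regular variation of the tail, slow variation and divergence of $\mu$---are not the operative conditions of \cite[Theorem 1]{Mori}, and they do not ``follow immediately'' into a licence to conclude. What Mori requires, for trimming $r=1$ term and normalizing by $A(x)=x\log x$, is the finiteness of the integral $J_2 = \int_0^\infty \mathcal F^2(x)\, dB^2(x)$, where $\mathcal F(x) = P(X_1 > x)$ and $B$ is the inverse function of $A$. This is the substantive condition coupling the tail decay to the growth of $A$, and verifying it is where essentially all of the paper's effort goes: one identifies $B(x) = x/W(x)$ with $W$ the Lambert $W$ function, computes $dB^2(x) = 2B(x)B'(x)\,dx = \frac{x}{W(x)(W(x)+1)}\,dx$, and bounds $J_2 \le 2\int_1^\infty \frac{dx}{x\,W(x)(W(x)+1)} < \infty$ using $W(x) = \log x - \log\log x + o(1)$. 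Without this computation (or an equivalent check), the appeal to Mori's theorem is unjustified, so your argument is incomplete at its crucial point. Your heuristic that trimming exactly one term suffices because the index of regular variation equals $1$ is the right intuition, but it is the convergence of $J_2$ that turns that intuition into a proof.
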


\noindent This is a consequence of a result of Mori~\cite{Mori}. The proof features a nice appearance of the Lambert $W$-function. There is a (more difficult) analogous result~\cite{DV} for continued fraction digits, where the limiting value is $\log 2$. There are substantial additional difficulties in that setting since continued fraction digits of a randomly chosen $x \in (0, 1)$ do not form an IID sequence.

\section{Uniqueness of Maxima}\label{sec:maxproof}
 Suppose $Y_1, \ldots, Y_k, \ldots$ is a sequence of IID $\mathbb N$-valued random variables with $$P(Y_1 = n) = p_n.$$ Let $$\tau_n = \sum_{m \geq n} p_m = P(Y_1 \geq n).$$ Let $\rho_{k, m}$ denote the probability that the sample maximum $M_k = \max(Y_1, \ldots, Y_k) = m$ and is achieved exactly once. Then $$\rho_{k,m} = k p_m (1-\tau_m)^{k-1}.$$ Note that $\rho_k$, the probability that the sample maximum $M_k$ is achieved exactly once by $Y_1, \ldots, Y_k$, is then given by $$\rho_k = \sum_{m=1}^{\infty} \rho_{k,m}.$$ Also note that (although it is not a probability) we have $$\sum_{k=1}^{\infty} \rho_{k,m} = p_m \sum_{k=1}^{\infty} k(1-\tau_m)^{k-1} = \frac{p_m}{\tau_m^2}.$$ Specializing to Luroth sequences, we have $p_m = \frac{1}{m(m+1)}$ and $\tau_m = \frac{1}{m}$. Thus $$\rho_{k,m} = \frac{k}{m(m+1)} \left( 1 - \frac 1 m \right)^{k-1} = k \frac{(m-1)^{k-1}}{m^k(m+1)} .$$ Let $$Q_k(m) = \frac{1}{k} \rho_{k, m} = \frac{(m-1)^{k-1}}{m^k(m+1)}.$$ Th next lemma shows how to expand $Q_k$ as a partial fraction.

\begin{lemma}\label{lem:partial} We have \begin{equation}\label{eq:qk}Q_k(m) = 2^{k-1} \left( \frac 1 m - \frac{1}{m+1} \right) + \sum_{j=2}^k T(k-1, k-j) (-1)^{j+1} \frac{1}{m^j}.\end{equation}
\end{lemma}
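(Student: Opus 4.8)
The plan is to treat $Q_k(m) = \frac{(m-1)^{k-1}}{m^k(m+1)}$ as a proper rational function and compute its partial fraction decomposition directly. Since the numerator has degree $k-1$ and the denominator degree $k+1$, with a simple pole at $m=-1$ and a pole of order $k$ at $m=0$, the decomposition must take the form
$$Q_k(m) = \frac{B}{m+1} + \sum_{j=1}^k \frac{A_j}{m^j},$$
with no polynomial part, and it remains only to identify $B$ and the $A_j$.

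First I would compute the residue at $m=-1$, namely
$$B = \lim_{m\to-1}(m+1)Q_k(m) = \frac{(-2)^{k-1}}{(-1)^k} = -2^{k-1}.$$
Next, the coefficients $A_j$ of the principal part at $m=0$ are read off from the Taylor expansion about $0$ of the function $g(m) := m^k Q_k(m) = \frac{(m-1)^{k-1}}{m+1}$, which is analytic there: writing $g(m) = \sum_{n\ge 0}c_n m^n$, one has $A_j = c_{k-j}$ for $1\le j\le k$. I would obtain the $c_n$ from the Cauchy product of the binomial expansion $(m-1)^{k-1} = \sum_{i=0}^{k-1}\binom{k-1}{i}(-1)^{k-1-i}m^i$ with the geometric series $\frac{1}{1+m} = \sum_{l\ge 0}(-1)^l m^l$. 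Collecting terms gives, for $0\le n\le k-1$,
$$c_n = (-1)^{k-1+n}\sum_{i=0}^n\binom{k-1}{i} = (-1)^{k-1+n}\,T(k-1,n),$$
since the sign $(-1)^{k-1-i}(-1)^{n-i} = (-1)^{k-1+n}$ is independent of the summation index $i$, which is exactly what produces the partial binomial sum $T(k-1,n)$. Substituting $n = k-j$ and simplifying the resulting sign yields $A_j = (-1)^{j+1}T(k-1,k-j)$.

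Finally I would assemble the pieces. The $j=1$ term has coefficient $A_1 = T(k-1,k-1) = 2^{k-1}$, so $A_1/m = 2^{k-1}/m$; combined with $B/(m+1) = -2^{k-1}/(m+1)$ this produces the leading term $2^{k-1}\!\left(\frac1m - \frac1{m+1}\right)$, while the terms $j=2,\dots,k$ reproduce the sum in \eqref{eq:qk}. I expect the only delicate point to be the sign and index bookkeeping in the Cauchy product, in particular verifying that the $i$-dependence of the sign cancels so that the coefficient collapses to the clean partial sum $T(k-1,n)$; everything else is routine. As a cross-check one can clear denominators and verify the equivalent polynomial identity $(m-1)^{k-1} = 2^{k-1}m^{k-1} + \sum_{j=2}^k T(k-1,k-j)(-1)^{j+1}m^{k-j}(m+1)$ by comparing coefficients, which confirms the claimed decomposition.
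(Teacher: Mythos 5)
Your proposal is correct, and it takes a genuinely different route from the paper. You derive the coefficients directly: the residue $-2^{k-1}$ at $m=-1$, and the principal part at $m=0$ read off from the Taylor coefficients of $g(m)=(m-1)^{k-1}/(m+1)$, computed as the Cauchy product of the binomial expansion with the alternating geometric series. The key observation that the sign $(-1)^{k-1-i}(-1)^{n-i}=(-1)^{k-1+n}$ is independent of $i$ is exactly right, and it is what makes the coefficient collapse to the partial binomial sum $T(k-1,n)$; the sign bookkeeping $A_j=c_{k-j}=(-1)^{j+1}T(k-1,k-j)$ checks out, as does the absence of a polynomial part (the rational function is proper). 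The paper instead proves the identity by induction on $k$, using the recursion $Q_{k+1}(m)=\left(1-\frac1m\right)Q_k(m)$ together with the Pascal-type identity $c(j,k)=c(j,k-1)-c(j-1,k-1)$ for $c(j,k)=(-1)^{j+1}T(k-1,k-j)$. The trade-off: the inductive proof is elementary and self-contained but requires already knowing the answer and verifying it, whereas your computation actually \emph{derives} the coefficients and explains conceptually why partial sums of binomial coefficients (Bernoulli's triangle) appear --- they arise from convolving $\binom{k-1}{i}$ against the constant-magnitude geometric series of $\frac{1}{1+m}$. The only thing worth making fully explicit in a writeup is the standard fact that a proper rational function equals the sum of its principal parts (the difference is entire, rational, and vanishes at infinity), and your final cross-check via the cleared-denominator polynomial identity is a sound independent verification.
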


\begin{proof} Note that $$T(k-1, 0) = 1, T(k-1, k-1) = 2^{k-1}, T(k-1, k-2) = 2^{k-1} - 1$$ and $$Q_{k+1}(m) = \left( 1 - \frac 1 m\right) Q_{k}(m).$$ Our proof proceeds by induction. For $k=1$, we have $$Q_1(m) = \rho_{1, m} = \frac{1}{m(m+1)}.$$ The right hand side of  Equation (\ref{eq:qk}) for $k=1$ is $$2^{1-1} \left (\frac 1 m - \frac{1}{m+1} \right) = \frac{1}{m(m+1)},$$ since the index of the sum starts at $j=2$, making the sum empty. Thus the base case ($k=1$) is verified. We now assume that Equation (\ref{eq:qk}) is true for $k$, and we need to prove it for $k+1$. Let $$c(j, k) = (-1)^{j+1} T(k-1, k-j).$$ Note that

\begin{align}\label{eq:induction} Q_{k+1}(m) &= \left( 1 - \frac 1 m\right) Q_{k}(m)  \nonumber
= \left( 1 - \frac 1 m\right) \left(2^{k-1} \left( \frac 1 m - \frac{1}{m+1} \right) + \sum_{j=2}^k c(j,k) \frac{1}{m^j}\right) \\ \nonumber
&= 2^{k-1}\frac{1}{m(m+1)} - \frac{2^{k-1}}{m}\left( \frac 1 m - \frac{1}{m+1}\right) + \sum_{j=2}^{k}\left( \frac{c(j,k) }{m^j} -\frac{c(j-1, k) }{m^{j+1}}\right)\\ \nonumber 
&= 2^k \frac{1}{m(m+1)} - \frac{(2^{k-1} - c(2, k))}{m^2} + \sum_{j=3}^{k+1} \frac{\left(c(j, k)- c(j-1, k) \right)}{m^j} . \\
\end{align}

\noindent Now note that for $j \geq 2$, $$c(j, k) = c(j, k-1) - c(j-1, k-1)$$ and $$c(k+1, k) = 0, c(2, k) = 1-2^{k-1}, c(k,k) = (-1)^{k+1}, c(1, k) = 2^{k-1}.$$ Plugging these into Equation (\ref{eq:induction}), we have our result.
\end{proof}

\subsection{Proof of Theorem~\ref{theorem:rhok}} To prove Theorem~\ref{theorem:rhok}, we note that $$\sum_{m=1}^{\infty} \frac{1}{m(m+1)} = 1;  \sum_{m=1}^{\infty} \frac{1}{m^j} = \zeta(j), j \geq 2.$$ Using  Lemma~\ref{lem:partial}, and summing over $m$, we obtain Equation (\ref{eq:rhok}). To obtain Corollary~\ref{cor:zeta}, it was shown in~\cite{AF, ESS} that $$\rho_k \rightarrow 1 \mbox{ if and only if } p_m/\tau_m \rightarrow 0.$$ In our setting, $\rho_m/\tau_m = \frac{1}{m+1}$, so the condition is satisfied, and we obtain Corollary~\ref{cor:zeta}. In Figure~\ref{fig:rhok}, we show numerical values of $\rho_k$ for $2 \le k \le 40$ (note that $\rho_1 = 1$).

\begin{figure}[h!] \includegraphics[width=0.9\textwidth]{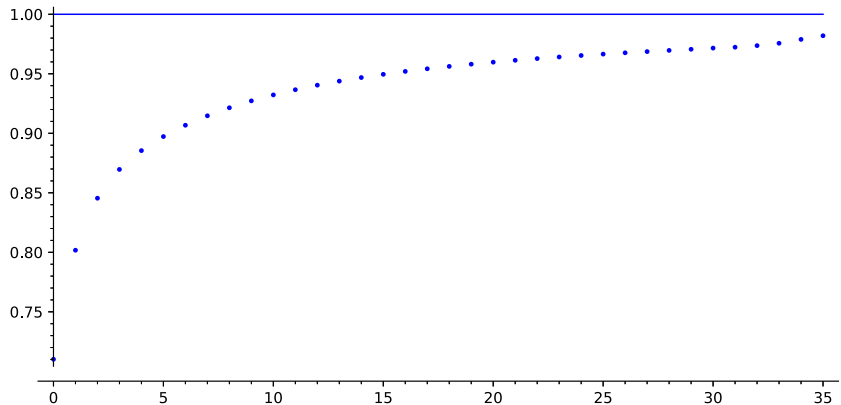}
\caption{$\rho_k$ for $k =2$ to $k=40$. \medskip}\label{fig:rhok}
\end{figure}

\section{Trimmed sums}\label{sec:trimproof}
In this section, we prove Theorem~\ref{theorem:trimmed}. We first note that using standard limit theorems in probability theory, we can show that $$\frac{S_k - M_k}{k \log k} \xrightarrow{d} 1.$$ The random variables $X_n$ are in the domain of attraction of a \emph{stable law} of order $1$. Using~\cite[Theorem 11.2.3]{AL}, we have $$\frac{S_k - k\log k}{k} \xrightarrow{d} C,$$ where $C$ is a Cauchy random variable. Thus, $$\frac{S_k}{k \log k}  = \left(\frac{1}{\log k}\frac{(S_k - k \log k)}{k} + 1 \right) \xrightarrow{d} 1.$$

Next, note that for $c >0$, $$\lim_{k \rightarrow \infty} P\left(\frac{M_k}{k} < c\right) = \lim_{k \rightarrow \infty} \left( 1 - \frac{1}{ck}\right)^{k} = e^{-1/c}.$$ Therefore $$\frac{M_k}{k\log k} \xrightarrow{d} 0.$$

Putting these together, we obtain $$\frac{S_k - M_k}{k \log k} \xrightarrow{d} 1,$$ and therefore we also have this convergence in probability.

To replace convergence in probability with almost sure convergence, we use a result of Mori~\cite[Theorem 1]{Mori}, with, in his notation, $$r=1, A(x) = x \log x.$$ Mori studies the behavior of the sample sums with the first $r$ largest terms removed, normalized by $A(n)$, where $A$ is an absolutely continuous increasing function such that there is an $0< \alpha< 2$, with $$A(x)x^{-\frac{1}{\alpha}} \mbox{ increasing and } \sup \frac{A(2x)}{A(x)} < \infty.$$ Keeping our notation consistent with ~\cite[Section 1]{Mori}, we write $A(x) = x \log x$, and note that this is a non-decreasing absolutely continuous function, with inverse function $B(x) = \frac{x}{W(x)}$, where $W(x)$ is the Lambert $W$ function, that is, the inverse of the function $we^w$. Putting $$\mathcal F(x) = P(X_1 > x) = \frac{1}{\lfloor x \rfloor + 1}$$ and $$J_2 = \int_{0}^{\infty} \mathcal F^2(x) dB^2(x) = \sum_{n=2}^{N} \frac{1}{n^2} \left( \frac{n^2}{W(n)^2} - \frac{(n-1)^2}{W(n-1)^2}\right),$$ Mori's result shows that if $J_2 < \infty$, that almost surely $$\frac{S_k-M_k}{A(k)} - c_k  \rightarrow 0,$$ where $$c_k = \frac{k}{A(k)} E(X_1| X_1 < A(k)).$$ In our setting, $$c_k \sim \frac{k}{k \log k} \log(k \log k) \rightarrow 1 \mbox{ as } k \rightarrow \infty.$$ So if we can show $J_2 < \infty$, we have completed our proof of Theorem~\ref{theorem:trimmed}. From~\cite{lambertw}, we see that for $x >>1$, $$W(x) = \log x - \log \log x  + o(1).$$ Note that $$dB^2(x) = 2 B(x) B'(x) dx = \frac{x}{W(x) (W(x) + 1)} dx.$$ Thus we have $$J_2 \le \int_1^{\infty} \frac{2}{x^2} B(x) B'(x) dx \sim 2 \int_{1}^{\infty} \frac{1}{x W(x) (W(x) + 1)}  =  2 \left(\frac{1}{W(1)}\right).$$ In particular,  $J_2 < \infty$. In Figure~\ref{fig:lambert}, we show the partial sums for $J_2$.

\begin{figure}[h!]\includegraphics[width=0.9\textwidth]{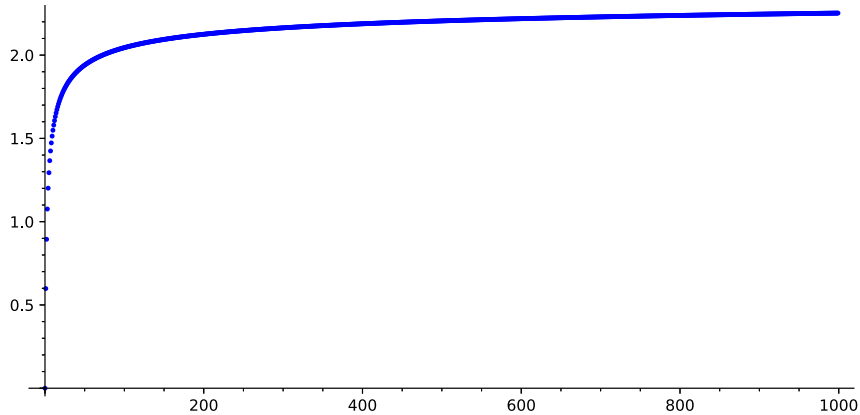}\caption{Partial sums of $\sum_{n=2}^{N} \frac{1}{n^2} \left( \frac{n^2}{W(n)^2} - \frac{(n-1)^2}{W(n-1)^2}\right)$, $N=3, \ldots, 1000$. \medskip}\label{fig:lambert}

\end{figure}

\section{Continued Fractions}\label{sec:Gauss} \noindent Similar questions can be asked for the \emph{Gauss map} $G: (0, 1) \rightarrow (0, 1)$, $$G(x) = \left\{ \frac 1 x \right\},$$ and the associated continued fraction expansion of a number $x \in [0, 1)$. Recall that for $x \in (0, 1)$, we can set, for $n \geq 1$, $$a_n = a_n(x) = \left \lfloor \frac{1}{G^{n-1}(x)} \right \rfloor,$$ and we can write $$x = \frac{1}{a_1 + \frac{1}{a_2 + \frac{1}{a_3 + \ldots}}}.$$ The probability measure $\mu_G$ given by $$d\mu_G(x) = \frac{1}{\log 2} \frac{dx}{1+x}$$ is an absolutely continuous ergodic invariant measure for $G$, and as noted after Theorem~\ref{theorem:trimmed}, Diamond and Vaaler~\cite{DV} proved a similar result for the digits $a_n$ for an $x$ chosen at random according to $\mu_G$. This is more difficult than our setting since although the digits form a stationary sequence, they are no longer IID. Regarding the uniqueness of sample maxima, if we define $$\rho_k = \mu_G\left(x \in (0, 1): \mbox{ there exists } ! 1 \le i \le k \mbox{ such that } a_i (x)= \max_{1 \le j \le k} a_j(x) \right),$$ we conjecture that $$\lim_{k \rightarrow \infty} \rho_k = 1.$$ We believe this conjecture since although the sequence $\{a_i\}$ is not IID, it is rapidly mixing, and the distribution of $a_i$ is heavy-tailed. The Gauss map is associated in a natural way to geodesic flow on the modular surface, and we believe similar results should also hold for other continued fraction type expansions associated to geodesic flow on other hyperbolic surfaces, for example, the Rosen continued fractions.

\bigskip

 \noindent {\bf Acknowledgements.} We thank Bruce Berndt, Alex Kontorovich, and Jeffrey Lagarias for useful discussions. Lagarias pointed out to us some interesting asymptotics involving binomial coefficients and zeta functions in~\cite[Theorem 2]{BL}. We thank Avanti Athreya, the anonyomous referee, and the Integers Editorial Office for suggestions that greatly improved the exposition of the paper. The first author thanks the National Science Foundation for its support via grant NSF DMS 2003528. The first author is a faculty member at the University of Washington, which is on the lands of the Coast Salish people, and touches the shared waters of all tribes and bands within the Suquamish, Tulalip and Muckleshoot nations.

\end{document}